\newcommand\R{\mathbb{R}}
\newcommand\C{\mathbb{C}}
\begin{document}

\newtheorem{theoreme}{Theorem}
\newtheorem{definition}{Definition}
\newtheorem{lemme}{Lemma}
\newtheorem{remarque}{Remark}
\newtheorem{exemple}{Example}
\newtheorem{proposition}{Proposition}
\newtheorem{corolaire}{Corollary}
\newtheorem{hyp}{Hypothesis}
\newtheorem*{theo}{Theorem}

\title{Sharp resolvent bounds and resonance-free regions}
\author{Maxime Ingremeau}

\maketitle
\begin{abstract}
In this note, we consider semiclassical scattering on a manifold which is Euclidean near infinity or asymptotically hyperbolic. We show that, if the cut-off resolvent satisfies polynomial estimates in a strip of size $O(h |\log h|^{-\alpha})$ below the real axis, for some $\alpha\geq 0$, then the cut-off resolvent is actually bounded by $O(|\log h|^{\alpha+1} h^{-1})$ in this strip. As an application, we improve slightly the estimates on the real axis given in \cite{bourgain2016spectral} in the case of convex co-compact surfaces.
\end{abstract}
\section{Introduction}
Let $(X,g)$ be a Riemannian manifold which is either Euclidean near infinity or which is asymptotically hyperbolic and even. Let $V\in C_c^\infty(X)$, an consider the $h$-dependent family of operators
$$P_h = -h^2\Delta_g +V,$$
and the family of its outgoing resolvent $R_+(z;h) = (P_h-z)^{-1}$, which is well defined for $\Im(z)>0$.

It is well-known (see \cite[\S 4 and \S5]{Resonances}) that, if $\chi \in C_c^\infty(X)$, then for any $h>0$, $z \mapsto \chi R_+(z;h)\chi$ can be extended to $\C\backslash (-\infty,0]$ as a meromorphic function. Its poles, which are independent of the choice of $\chi$, are called the \emph{resonances} of $P_h$.

\begin{theoreme}\label{th}
Let $(X,g)$ and $P_h$ be as above. Fix $E_0>0$ and $\chi \in C_c^\infty(X)$. 

Suppose that there exists $\alpha_1\geq 0$ and $\varepsilon_0, h_0, \alpha_2, C_1, C_2>0$, such that for all $0<h<h_0$, $P_h$ has no resonances in
\begin{equation}\label{eq: def D}
\mathcal{D}_h:=\Big{\{}z\in \C ; \Re z \in [E_0-\varepsilon_0, E_0+\varepsilon_0] \text{ and } \Im z \geq -C_1 \frac{h}{|\log h|^{\alpha_1}}\Big{\}}.
\end{equation}
and such that for all $z\in \mathcal{D}_h$,
\begin{equation}\label{eq:aprioribound}
\|\chi R_+(z;h)\chi \|_{L^2\mapsto L^2} \leq C_2 h^{-\alpha_2}.
\end{equation}

Then there exists $C_0>0$ such that for all $0<h<h_0$ and for all $E\in [E_0-\varepsilon, E_0+\varepsilon]$, we have
\begin{equation}\label{eq:optimal bound}
\|\chi R_+(E;h)\chi \|_{L^2\mapsto L^2} \leq C_0 \frac{|\log h|^{\alpha_1+1}}{h}.
\end{equation}
\end{theoreme}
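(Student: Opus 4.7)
The strategy is to transform the polynomial resolvent bound in the thin strip $\mathcal{D}_h$ into an exponential decay estimate for the cut-off Schr\"odinger propagator $U_\chi(t) := \chi e^{-itP_h/h}\chi$ valid up to the log-time $T_h := T_0|\log h|^{\alpha_1+1}$, and then integrate in $t$ to recover an improved bound on $\chi R_+(E;h)\chi$ at real energies. The relevant balance is $\gamma_h T_h / h = C_1 T_0 |\log h|$, where $\gamma_h := C_1 h/|\log h|^{\alpha_1}$, so that $e^{-\gamma_h T_h/h} = h^{C_1 T_0}$ dominates any fixed negative power of $h$ once $T_0$ is taken large.

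\textbf{Key steps.} First, localize in energy: pick $\psi \in C_c^\infty((E_0-\varepsilon_0,E_0+\varepsilon_0))$ equal to $1$ near $E_0$, and reduce to bounding $\psi(P_h)\chi R_+(E;h)\chi\psi(P_h)$; off-diagonal remainders such as $\chi R_+(E;h)\chi(1-\psi(P_h))$ have operator norm $O(1)$ for $E$ near $E_0$, since $E$ lies at positive distance from $\mathrm{supp}(1-\psi)\cap\sigma(P_h)$ and the self-adjoint functional calculus provides the bound. Second, from a contour-integral representation of $\psi(P_h)\chi e^{-itP_h/h}\chi\psi(P_h)$ (obtained by combining the spectral theorem with the cut-off resolvent), shift the contour from the real axis down to $\Im z = -\gamma_h$ inside $[E_0-\varepsilon_0,E_0+\varepsilon_0]\times i[-\gamma_h,0]$: the absence of resonances in $\mathcal{D}_h$ permits the shift, the vanishing of $\psi$ at $E_0\pm\varepsilon_0$ kills the vertical sides, and the hypothesized bound yields
\[
\|\psi(P_h)\chi e^{-itP_h/h}\chi\psi(P_h)\|_{L^2\to L^2} \leq C h^{-\alpha_2} e^{-\gamma_h t/h},\qquad t>0.
\]
Third, from the Laplace-transform representation
\[
\psi(P_h)\chi R_+(E;h)\chi\psi(P_h) = \frac{i}{h}\int_0^\infty e^{iEt/h}\,\psi(P_h)\chi e^{-itP_h/h}\chi\psi(P_h)\,dt,
\]
split at $T_h$: for $t\in[0,T_h]$ the unitary bound $\|\psi(P_h)\chi e^{-itP_h/h}\chi\psi(P_h)\|\leq C$ gives a contribution $\leq CT_h/h = CT_0|\log h|^{\alpha_1+1}/h$, while for $t\geq T_h$ the decay estimate gives
\[
\tfrac{1}{h}\int_{T_h}^\infty C h^{-\alpha_2} e^{-\gamma_h t/h}\,dt \leq C|\log h|^{\alpha_1}h^{C_1T_0-\alpha_2-1} = O(h^N)
\]
for $T_0$ large. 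Combining with the $O(1)$ off-diagonal pieces from step one yields \eqref{eq:optimal bound}.

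\textbf{Main obstacle.} The most delicate point is the contour shift in the second step, since the hypothesis controls $\chi R_+(z;h)\chi$ only on the bounded energy window $[E_0-\varepsilon_0,E_0+\varepsilon_0]$; the cutoff $\psi$ is tailored so the vertical sides of the rectangle contribute zero, but implementing this cleanly at the operator level (rather than for scalar matrix elements) requires some bookkeeping with almost-analytic extensions and Helffer--Sj\"ostrand formulas, and one must verify that the insertion of $\psi(P_h)$ on both sides does not forfeit the target $1/h$ denominator. A conceptually cleaner alternative is to invoke a semiclassical maximum principle of Burq / Tang--Zworski type directly on the scalar holomorphic functions $z\mapsto\langle\chi R_+(z;h)\chi u,v\rangle$ in $\mathcal{D}_h$, interpolating between the a priori bound $\|R_+(z)\|\leq 1/\Im z$ above the real axis and the polynomial bound below after multiplying by an auxiliary phase $e^{i\sigma(z-E_0)/h}$ with $\sigma\asymp|\log h|^{\alpha_1+1}$; either way, the exponent $\alpha_1+1$ in the final log factor traces back to the same balance $\gamma_h T_h/h\asymp|\log h|$ identified in the plan.
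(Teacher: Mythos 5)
Your plan follows the same overall strategy as the paper: use the resonance-free strip and the a priori polynomial bound to get decay of the truncated propagator after times $T_h\asymp|\log h|^{\alpha_1+1}$ (via Stone's formula and a contour shift into the strip), then reconstruct the resolvent as a time integral and split at $T_h$, treating the spectrally-off-window piece by a standard non-trapping/elliptic argument. The balance $\gamma_h T_h/h\asymp|\log h|$ that you identify is exactly what drives the paper's choice of $M_r$ and the exponent $\alpha_1+1$.

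The one place where you diverge from the paper, and where your sketch as written has a genuine gap, is the passage to real energy. Your step~2 asserts the operator bound
$\|\psi(P_h)\chi e^{-itP_h/h}\chi\psi(P_h)\|\le C h^{-\alpha_2}e^{-\gamma_h t/h}$
for all $t>0$, and step~3 then takes the Laplace transform directly at $E\in\R$, relying on this decay for absolute convergence of $\int_{T_h}^\infty$. But the contour shift does not yield a purely exponentially decaying bound: since $\psi$ is not analytic, one must use an almost-analytic extension $\tilde\psi$, and the $\bar\partial$-term
$\int_{-\gamma_h\le\Im z\le 0} e^{-itz/h}\,\chi\bigl(R_--R_+\bigr)\partial_{\bar z}\tilde\psi\,\chi\,dA(z)$
contributes a remainder that is $O(h^\infty)$ uniformly in $t$ but is \emph{not} dominated by $e^{-\gamma_h t/h}$. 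With only the stated bound, $\tfrac{1}{h}\int_{T_h}^\infty$ of that remainder is divergent. One can salvage your route by observing that the $\bar\partial$-term actually decays like $O_N\bigl(h^{-\alpha_2}(h/t)^{N}\bigr)$ for $t\ge 1$ (integrate $e^{ty/h}|y|^N$ in $y$), which is enough to make the $t\ge T_h$ integral $O(h^\infty)$; but this is precisely the ``bookkeeping'' you postpone. The paper avoids the issue altogether: it only derives the resolvent bound for $\Im z>0$, where the Laplace integral converges trivially, obtaining an intermediate estimate $\lesssim |\log h|^{\alpha_1+1}/h + h^r/\Im z$, and then passes to $\Im z=0$ by Hadamard's three-lines theorem applied to $z\mapsto e^{-(E-z)^2/h^2}\langle f,\chi R_+(z;h)\chi g\rangle$ on the strip $[-C_1 h|\log h|^{-\alpha_1},h^N]$, interpolating between the a priori bound below the real axis and the $\Im z=h^N$ bound above. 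This is exactly the ``Burq / Tang--Zworski maximum-principle alternative'' you flag at the end of your obstacle paragraph; the paper makes it the main argument rather than a fallback, which is cleaner because it never needs quantitative $t$-decay of the $\bar\partial$-remainder.
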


Note that a converse to this statement was proved in \cite{datchev2012extending}, using ideas from Vodev (See for instance \cite{vodev2014semi}. One may also see \cite[Theorem 6.25]{Resonances}, and the references following the proof of the theorem). In particular, they show that if (\ref{eq:optimal bound}), then similar estimates hold in a strip of size $O(h |\log h|^{-\alpha_1-1})$ below the real axis.

As an application of Theorem \ref{th}, we can improve slightly the bounds on the resolvent given in \cite{bourgain2016spectral} in the case of convex co-compact hyperbolic surfaces. Indeed, Theorem 2 in \cite{bourgain2016spectral} implies that the point (ii) of our theorem is satisfied with $\alpha_1=0$. Therefore, we obtain
\begin{corolaire}
Let $(X,g)$ be a convex co-compact hyperbolic surface and let $\chi\in C_c^\infty(X)$.  Then there exists $C, C', h_0>0$ such that for any $\epsilon>0$ and any $0<h<h_0$, we have
\begin{equation*}
\forall z \in B\Big{(}1, C \frac{h}{|\log h|}\Big{)},~~ \|\chi (-h^2\Delta_g - z)^{-1}\chi\|_{L^2\rightarrow L^2}\leq C \frac{|\log h|}{h},
\end{equation*}
where $(-h^2\Delta_g - z)^{-1}$ denotes the outgoing resolvent of $-h^2\Delta_g$.
\end{corolaire}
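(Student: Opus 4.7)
The corollary follows by composing three results already introduced in the paper: the Bourgain--Dyatlov resonance-free strip with polynomial resolvent bound (Theorem~2 of \cite{bourgain2016spectral}), Theorem~\ref{th}, and the Datchev--Vasy extension \cite{datchev2012extending} invoked right after Theorem~\ref{th}.

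First I would verify that the hypothesis of Theorem~\ref{th} is satisfied with $E_0 = 1$ and $\alpha_1 = 0$. Theorem~2 of \cite{bourgain2016spectral} supplies constants $C_1, C_2, \varepsilon_0, \alpha_2, h_0 > 0$ such that on any convex co-compact hyperbolic surface, $-h^2 \Delta_g$ has no resonances in $\{|\Re z - 1| \leq \varepsilon_0,\ \Im z \geq -C_1 h\}$ and satisfies $\|\chi(-h^2\Delta_g - z)^{-1}\chi\|_{L^2\to L^2} \leq C_2 h^{-\alpha_2}$ there; this is precisely (\ref{eq:aprioribound}) with $\alpha_1 = 0$. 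Theorem~\ref{th} then yields, for some $C_0 > 0$ and every real $E$ in a fixed neighbourhood of $1$,
$$
\|\chi(-h^2\Delta_g - E)^{-1}\chi\|_{L^2\to L^2} \leq C_0\,\frac{|\log h|}{h}.
$$

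To promote this pointwise bound on the real axis to the full disk $B(1, Ch/|\log h|)$, I would invoke the converse to Theorem~\ref{th} recalled in the excerpt: once (\ref{eq:optimal bound}) holds on a real interval around $E_0$, the Vodev-style argument of \cite{datchev2012extending} (resolvent identity plus semiclassical maximum principle) propagates the same $|\log h|/h$ bound into a strip of half-width $\sim h/|\log h|$ below the real axis. The analogue above is essentially free: for $\Im z \geq h/|\log h|$ self-adjointness already gives $\|(P_h-z)^{-1}\| \leq 1/\Im z \leq |\log h|/h$, and for $0 < \Im z < h/|\log h|$ one may run the same Vodev/max-principle argument in the upper half plane, using the real-axis bound and $\|R_+(z)\| \leq 1/\Im z$ as the two inputs. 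Choosing $C$ no larger than the strip-width constants produced by this two-sided extension ensures that $B(1, Ch/|\log h|)$ is contained in the resulting two-sided strip, whence the estimate.

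\textbf{Main obstacle.} There is no substantive analytic difficulty beyond what is already packaged in the cited results and Theorem~\ref{th}; the real care needed is bookkeeping, namely matching the constants produced by \cite{bourgain2016spectral}, Theorem~\ref{th}, and \cite{datchev2012extending} so that the single radius $Ch/|\log h|$ and upper bound $C|\log h|/h$ appearing in the statement can be realised with the same constant $C$.
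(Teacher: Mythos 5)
Your plan matches the paper's (very terse) justification of the corollary: verify hypothesis (ii) of Theorem~\ref{th} with $\alpha_1=0$ using Theorem~2 of \cite{bourgain2016spectral}, apply Theorem~\ref{th} to get the $|\log h|/h$ bound on a real interval around $E=1$, and then pass from the real axis to the disk $B(1, Ch/|\log h|)$ via the Vodev/Datchev--Vasy extension that the paper has just quoted. The paper itself writes only ``Therefore, we obtain'' and leaves the last step implicit, so you are not deviating from the intended route; you are simply making it explicit, and your identification of \cite{datchev2012extending} as the mechanism filling the gap is the natural reading.

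One remark: the Datchev--Vasy step is not strictly necessary here, because the Hadamard three-lines argument in the proof of Theorem~\ref{th} already controls $v(y)$ (and hence the resolvent) for \emph{all} $y$ in a sub-interval of $[-C_1 h|\log h|^{-\alpha_1}, h^N]$, not just at $y=0$; unwinding the Gaussian weight and tracking the convexity estimate shows that the $|\log h|^{\alpha_1+1}/h$ bound persists for $|\Im z|\lesssim h/|\log h|^{\alpha_1+1}$, which with $\alpha_1=0$ is precisely the radius in the corollary. That internal route avoids having to match constants against a separately cited theorem and also handles the upper half-disk in one stroke (your appeal to self-adjointness only covers $\Im z\geq h/|\log h|$, and the ``run the same argument above the axis'' clause is left rather schematic). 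Neither observation is a flaw in your proposal, but if you want a self-contained argument it is cleaner to note that the strip bound is already a byproduct of the three-lines step.
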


The bound (\ref{eq:optimal bound}) with $\alpha_1=0$ is known to be optimal when the dynamics has a non-empty trapped set at energy $E$, as was shown in \cite{bonyburqramondminoration}. It is known to hold in several situations where the dynamics is \emph{hyperbolic} near the trapped set at energy $E$: see for instance \cite{burq2004smoothing}, \cite{christianson2007semiclassical}, \cite{christianson2008dispersive},\cite{NZ}, \cite{nonnenmacher2009semiclassical} and \cite{wunsch2010resolvent} and the references therein.

In \cite{christianson2013local}, the authors consider manifolds $(X,g)$ with a single trapped trajectory, which is hyperbolic in a degenerate way. On such manifolds, they show that $\|\chi R_+(E;h)\chi \|_{L^2\mapsto L^2} \leq C h^{-\alpha}$ for some $\alpha>1$, but that such an estimate is false for any constant $\alpha'<\alpha$. Therefore, by the result of \cite{datchev2012extending}, the resolvent is polynomially bounded in a strip of size $h^{-\alpha}$ below the real axis, but it does not satisfy (\ref{eq:optimal bound}). This shows that our result does not hold if, in $(ii)$, we replace $\mathcal{D}_h$ by $\mathcal{D}''_h:=\Big{\{}z\in \C ; \Re z \in [E_0-\varepsilon_0, E_0+\varepsilon_0] \text{ and } \Im z \geq -C h^{-\alpha}\Big{\}}$ for some $\alpha>1$.

\paragraph{Acknowledgments}
The author would like to thank Stéphane Nonnenmacher for useful discussion during the writing of this note.

The author is partially supported by the Agence Nationale de la Recherche project GeRaSic (ANR-13-BS01-0007-01).

\section{Proof of Theorem \ref{th}}\label{sec : proof}
Fix constants $\varepsilon_0, h_0, \alpha_1, \alpha_2, C_1, C_2>0$ as in the statement of $(ii)$, and  we fix a function $\psi\in C^\infty_c(E_0-\varepsilon_0, E_0+\varepsilon_0)$ such that $\psi(x) = 1$ for $x\in (E_0-\varepsilon_0/2, E_0+\varepsilon_0/2)$.

If $t\in \R$, we shall write $U_h(t)$ or $e^{i\frac{t}{h} P_h}$ for the Schrödinger propagator of $P_h$.
Our first lemma says that the truncated propagators become small after times of the order of a large constant times $|\log h|$.

\begin{lemme}
For any $r>0$, there exists $M_r>0$ and $C_r>0$ such that 
$$\|\chi U_h(M_r |\log h|^{\alpha_1+1})  \psi(-h^2 \Delta) \chi\|_{L^2\rightarrow L^2} \leq C_r h^r.$$  
\end{lemme}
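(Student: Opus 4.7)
The plan is to represent $\chi U_h(t)\psi(-h^2\Delta)\chi$ as a contour integral of the resolvents $R_{\pm}(z;h)$ and then deform the contour down by $\sigma_h := C_1 h|\log h|^{-\alpha_1}$ into the strip $\mathcal{D}_h$ where, by hypothesis, there are no resonances and the polynomial bound~(\ref{eq:aprioribound}) holds.

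As a preliminary step I would reduce to the case where the energy cut-off is a function of $P_h$ rather than of $-h^2\Delta$. Using the Helffer--Sj\"ostrand formula together with the compact support of $V$ and pseudolocality, one can replace $\psi(-h^2\Delta)$ by $\psi_1(P_h)$ up to $O(h^\infty)$ errors in operator norm (possibly after enlarging $\chi$ to an auxiliary $\chi' \in C_c^\infty(X)$ with $\chi'\chi = \chi$), where $\psi_1 \in C_c^\infty((E_0-\varepsilon_0, E_0+\varepsilon_0))$ equals $1$ on $\mathrm{supp}\,\psi$. The problem then reduces to estimating $\|\chi U_h(t)\psi_1(P_h)\chi\|_{L^2 \to L^2}$.

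Stone's formula gives
\begin{equation*}
\chi U_h(t)\psi_1(P_h)\chi = \frac{1}{2\pi i}\int_\R e^{-itE/h}\psi_1(E)\,\chi[R_+(E;h)-R_-(E;h)]\chi\,dE.
\end{equation*}
Letting $\widetilde\psi_1$ denote an almost-analytic extension of $\psi_1$, compactly supported with $\Re z \in (E_0 - \varepsilon_0, E_0 + \varepsilon_0)$ and with $\bar\partial\widetilde\psi_1 = O(|\Im z|^\infty)$, I would apply Stokes' theorem to deform both contours to $\{\Im z = -\sigma_h\}$. For the $R_+$ integral, the absence of resonances in $\mathcal{D}_h$ justifies the deformation and (\ref{eq:aprioribound}) yields $\|\chi R_+(z;h)\chi\| \le C_2 h^{-\alpha_2}$ on the shifted contour. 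For the $R_-$ integral, the self-adjointness of $P_h$ ensures $R_-(z;h)$ is holomorphic throughout the lower half-plane, with the trivial bound $\|R_-(z;h)\| \le |\Im z|^{-1} \le Ch^{-1}|\log h|^{\alpha_1}$. The bulk integrals $\int_\C \bar\partial\widetilde\psi_1(z)\,(\cdots)\,dL(z)$ contribute $O(h^N)$ for any $N$ because $\bar\partial\widetilde\psi_1$ vanishes to infinite order on the real axis.

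The crucial gain is the exponential factor on the shifted contour: $|e^{-itz/h}| = e^{-t\sigma_h/h}$, and for $t = M_r|\log h|^{\alpha_1+1}$ this equals $h^{M_r C_1}$, which comfortably dominates the polynomial losses. Choosing $M_r > (r+\alpha_2)/C_1$ then yields $\|\chi U_h(t)\psi_1(P_h)\chi\| \le C_r h^r$. The main technical obstacle will be the preliminary reduction from $\psi(-h^2\Delta)$ to a function of $P_h$, and the bookkeeping needed so that the almost-analytic extension stays within the region where (\ref{eq:aprioribound}) applies; the contour-deformation argument itself is a now-classical technique in scattering theory.
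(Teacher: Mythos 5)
Your approach is essentially the paper's: both write the truncated propagator via Stone's formula, introduce an almost-analytic extension of the cut-off, deform the contour down to $\Im z = -C_1 h|\log h|^{-\alpha_1}$, absorb the bulk $\bar\partial$ term into $O(h^\infty)$, and extract the gain $h^{M_r C_1}$ from the exponential on the shifted contour. Your choice of the trivial spectral-theorem bound $\|R_-(z;h)\|\le |\Im z|^{-1}$ on the shifted contour is a small variation from the paper, which just invokes the a priori estimate~(\ref{eq:aprioribound}) (which also covers $R_-$ via $R_-(z)^* = R_+(\bar z)$); either works, since any polynomial-in-$h^{-1}$ loss is harmless against the exponential factor.

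The one place where you diverge is the preliminary reduction from $\psi(-h^2\Delta)$ to $\psi_1(P_h)$. Your instinct is right — the Stone's formula you write down is for spectral cut-offs in $P_h$, not $-h^2\Delta$, and the paper in fact silently treats $\psi(-h^2\Delta)$ as though it were $\psi(P_h)$ (compare with the proof of Lemma~\ref{lem: sanscutoff}, which switches to $\psi(P_h)$). However, your reduction is slightly misstated: $\psi(-h^2\Delta)$ and $\psi_1(P_h)$ are \emph{not} $O(h^\infty)$-close in operator norm, since their principal symbols $\psi(|\xi|_g^2)$ and $\psi_1(|\xi|_g^2+V(x))$ differ on the support of $V$. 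What one can (and should) arrange instead is that $(\mathrm{Id}-\psi_1(P_h))\psi(-h^2\Delta)\chi = O(h^\infty)$, i.e.\ that the symbol support of $\psi(-h^2\Delta)\chi$ lies inside the region where $\psi_1\circ p\equiv 1$; this requires $\psi_1$ to equal $1$ on a neighbourhood of $\mathrm{supp}\,\psi + V(X)$ while remaining supported where the hypotheses of the theorem apply, so some care with $\varepsilon_0$ is needed. This is a bookkeeping point that the paper glosses over too, so it does not distinguish your argument from theirs; just phrase the reduction as an inclusion of microsupports (via the composition calculus or Helffer--Sj\"ostrand) rather than as norm-closeness of the two spectral cut-offs.
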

The proof is very similar to that of \cite[Theorem 7.15]{Resonances}
\begin{proof}
Let us consider the incoming resolvent $R_-(z;h) := (P_h- z)^{-1}$, which is analytic for $-\Re z >0$.
Using Stone's formula, we obtain that for any $t>0$, we have
\begin{equation*}
\chi U_h(t)  \psi(-h^2 \Delta) \chi = \frac{1}{2i \pi} \int_{\R} e^{-it z/h} \chi \big{(}R_-(z;h) -  R_+(z;h)\big{)} \psi(z) \chi \mathrm{d}z.
\end{equation*}

Let $\tilde{\psi}$ be an almost analytic extension of $\psi$, that is to say, a function $\tilde{\psi}\in C_c^\infty(\C)$ such that 
\begin{equation}\label{eq: almost analytic}
\partial_{\overline{z}} \psi (z) = O\big{(}(\Im z)^\infty\big{)}
\end{equation}
 and such that $\tilde{\psi}(z)= \psi(z)$ for $z\in \R$. We may furthermore assume that $$\mathrm{spt }~ \tilde{\psi}\subset \{z; \Re z \in \mathrm{spt}~\psi\}.$$
We refer the reader to \cite[\S 2]{martinez2002introduction} for the construction of such a function.

Using Green's formula, we obtain that 
\begin{equation*}
\begin{aligned}
\chi U_h(t)  \psi(-h^2 \Delta) \chi &= \frac{1}{2i \pi} \int_{\Im z = -C_1 h|\log h|^{-\alpha_1}} e^{-it z/h} \chi \big{(}R_-(z;h) -  R_+(z;h)\big{)} \tilde{\psi}(z) \chi \mathrm{d}z\\
&+ \frac{1}{2i \pi} \int_{-C_1 h|\log h|^{-\alpha_1}\leq \Im z \leq 0} e^{-it z/h} \chi \big{(}R_-(z;h) -  R_+(z;h)\big{)} \partial_{\overline{z}} \psi(z) \chi \mathrm{d}z.
\end{aligned}
\end{equation*}

Thanks to (\ref{eq:aprioribound}) and to (\ref{eq: almost analytic}), the second term is $O(h^\infty)$, independently of $t$. On the other hand, by (\ref{eq:aprioribound}), the first term is bounded by $C e^{-C_1 t |\log h|^{-\alpha_1}} h^{-\alpha_2}$. Therefore, taking $t= M |\log h|^{\alpha_1+1}$ with $M$ large enough proves the lemma.
\end{proof}

The rest of the proof is similar to \cite[\S 9]{NZ}.
In the following two lemmas, we use our propagator estimates to deduce bounds on the outgoing resolvent when $\Im z>0$.

\begin{lemme}
For all $r>0$, there exists $C'_r>0$ such that for all $z\in \C$ with $\Im z>0$, we have 
$$\|\chi (P_h -z)^{-1}\psi(-h^2 \Delta)\chi \|_{L^2\rightarrow L^2} \leq \frac{C_r |\log h|^{\alpha_1+1}}{h} + C_r \frac{h^r}{\Im z}.$$
\end{lemme}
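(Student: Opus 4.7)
The plan is to represent the resolvent as a Laplace transform of the propagator. Since $P_h$ is self-adjoint and $\Im z>0$, the functional calculus gives
\[
\chi (P_h-z)^{-1}\psi(-h^2\Delta)\chi \;=\; \frac{i}{h}\int_0^\infty e^{itz/h}\,\chi\, U_h(-t)\,\psi(-h^2\Delta)\,\chi\, dt,
\]
the integral being absolutely convergent because $|e^{itz/h}|=e^{-t\Im z/h}$. I would then split this integral at the critical time $T_h:=M_r|\log h|^{\alpha_1+1}$, where $M_r$ is the constant produced by the previous lemma (taken large enough in terms of $r,\alpha_1,\alpha_2,C_1$).

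On the short window $t\in[0,T_h]$ I use the trivial bound $\|\chi U_h(-t)\psi(-h^2\Delta)\chi\|_{L^2\to L^2}\le C$ (unitarity of $U_h$ together with boundedness of the cutoffs) and $|e^{itz/h}|\le 1$, which contributes at most $C T_h/h = C M_r|\log h|^{\alpha_1+1}/h$. This matches the first term of the claimed estimate.

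For the tail $t\ge T_h$, I would combine a smallness estimate on the truncated propagator with the integrable damping factor $\int_{T_h}^\infty e^{-t\Im z/h}\,dt\le h/\Im z$, so that after the overall $1/h$ prefactor I obtain the $h^r/\Im z$ term. The main obstacle is that the previous lemma is written only at one specific positive time, whereas I need a uniform bound on $\|\chi U_h(-t)\psi(-h^2\Delta)\chi\|$ for every $t\ge T_h$. Inspection of its proof actually yields the stronger pointwise estimate
\[
\|\chi U_h(t)\psi(-h^2\Delta)\chi\|_{L^2\to L^2}\le C\,h^{-\alpha_2}\,e^{-C_1|t|\,|\log h|^{-\alpha_1}}\quad\text{for every } t\in\mathbb{R},
\]
the negative-time case coming either from time-reversal invariance (complex conjugation conjugates $U_h(t)$ into $U_h(-t)$, since $V$ is real) or, alternatively, from redoing the contour shift in the upper half-plane where $R_+$ is analytic and $R_-$ satisfies the needed polynomial bound via $R_-(\bar z;h)=R_+(z;h)^*$. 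Choosing $M_r$ so that $C_1 M_r\ge r+\alpha_2$ makes the right-hand side at $|t|=T_h$ at most $C h^r$, which provides the uniform tail bound I need. Summing the two contributions and absorbing constants then yields the stated inequality.
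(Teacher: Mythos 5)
Your proof follows essentially the same route as the paper: represent $\chi(P_h-z)^{-1}\psi(-h^2\Delta)\chi$ as a Laplace transform of the truncated propagator, split the time integral at $T_h=M_r|\log h|^{\alpha_1+1}$, use unitarity on the short window to get the $|\log h|^{\alpha_1+1}/h$ term, and use the decay from the first lemma on the tail to get the $h^r/\Im z$ term. You are right, and more careful than the paper itself, in observing that the first lemma as \emph{stated} only controls the propagator at the single time $T_h$, whereas what is actually needed is a uniform bound for all $|t|\geq T_h$; your remark that the contour-shift argument in that lemma yields the pointwise estimate $\|\chi U_h(t)\psi(-h^2\Delta)\chi\|\lesssim h^{-\alpha_2}e^{-C_1|t|\,|\log h|^{-\alpha_1}}$ (and that the negative-time case follows by taking adjoints / using $R_-(\bar z)=R_+(z)^*$) correctly fills this small gap that the paper's proof glosses over.
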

\begin{proof}
Since $\Im z>0$, we have
\[
\chi (P_h -z)^{-1}\psi(-h^2 \Delta)\chi = \frac{1}{h}\int_0^\infty e^{itz/h} \chi  e^{-i\frac{t}{h} P_h}\psi(-h^2 \Delta)\chi \mathrm{d}t,
\]
so that, 
\[
\begin{aligned}
\|\chi (P_h -z)^{-1}\psi(-h^2 \Delta)\chi \|_{L^2\rightarrow L^2} &= \frac{1}{h}\int_0^\infty e^{- \frac{t}{h} \Im z} \| \chi e^{-i\frac{t}{h} P_h}\psi(-h^2 \Delta)\chi\| \mathrm{d}t\\
&= \frac{1}{h}\int_0^{M_r|\log h|^{\alpha_1+1}} e^{- \frac{t}{h} \Im z} \| \chi e^{-i\frac{t}{h} P_h}\psi(-h^2 \Delta)\chi\| \mathrm{d}t\\
&+ \frac{1}{h}\int_{M_r|\log h|^{\alpha_1+1}}^\infty e^{- \frac{t}{h} \Im z} \| \chi e^{-i\frac{t}{h} P_h}\psi(-h^2 \Delta)\chi\| \mathrm{d}t\\
&\leq  \frac{1}{h}\int_0^{M_r|\log h|^{\alpha_1+1}} \mathrm{d}t + \frac{1}{h}\int_{M_r |\log h|}^\infty e^{- \frac{t}{h} \Im z} C_r h^r \mathrm{d}t \\
&\leq M_r \frac{|\log h|^{\alpha_1+1}}{h} + C_r h^{r-1} \int_0^\infty  e^{- \frac{t}{h} \Im z}\mathrm{d}t\\
&\leq M_r \frac{|\log h|^{\alpha_1+1}}{h} + C_r \frac{h^{r}}{\Im z}.
\end{aligned}
\]

 This proves the lemma.
\end{proof}

\begin{lemme}\label{lem: sanscutoff}
For all $r>0$, there exists $C_r>0$ such that for all $z\in \C$ such that $\Re z\in [E_0-\varepsilon_0/8, E_0+\varepsilon_0/8]$, $\Im z>0$, we have 
$$\|\chi (P_h-z)^{-1}\chi \|_{L^2\rightarrow L^2} \leq \frac{C_r |\log h|^{\alpha_1+1}}{h} + C_r\frac{h^r}{\Im z}.$$
\end{lemme}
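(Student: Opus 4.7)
The plan is to decompose the resolvent by inserting the spectral cut-off $\psi(-h^2\Delta)$:
\begin{equation*}
\chi(P_h-z)^{-1}\chi = \chi(P_h-z)^{-1}\psi(-h^2\Delta)\chi + \chi(P_h-z)^{-1}\bigl(1-\psi(-h^2\Delta)\bigr)\chi.
\end{equation*}
The first summand is directly controlled by the previous lemma, so the whole task is to estimate the second summand by the same right-hand side.

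For that, I would use the commutation of $1-\psi(-h^2\Delta)$ with $(-h^2\Delta-z)^{-1}$, combined with the resolvent identity $(P_h-z)^{-1}=(-h^2\Delta-z)^{-1}-(P_h-z)^{-1}V(-h^2\Delta-z)^{-1}$, to rewrite the second summand as
\begin{equation*}
\chi\bigl(1-\psi(-h^2\Delta)\bigr)(-h^2\Delta-z)^{-1}\chi-\chi(P_h-z)^{-1}V\bigl(1-\psi(-h^2\Delta)\bigr)(-h^2\Delta-z)^{-1}\chi.
\end{equation*}
The first term is $O(1)$ uniformly in $z$ and $h$ by functional calculus for $-h^2\Delta$: the hypothesis $\Re z\in[E_0-\varepsilon_0/8,E_0+\varepsilon_0/8]$ ensures that $\Re z$ is at distance at least $3\varepsilon_0/8$ from $\mathrm{supp}(1-\psi)$, so $\lambda\mapsto(1-\psi(\lambda))/(\lambda-z)$ is a bounded function of a real variable, uniformly in the regime of interest. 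For the second term, the compact support of $V$ lets me write $V=\chi_1 V$ for some $\chi_1\in C_c^\infty(X)$ with $\chi_1\equiv 1$ on $\mathrm{supp}\,V$ and $\mathrm{supp}\,\chi_1\subset\{\chi\equiv 1\}$ (after enlarging $\chi$ if necessary, which is legitimate since the hypotheses are preserved), reducing the bound to one on $\chi(P_h-z)^{-1}\chi_1$---an operator of the same type as $\chi(P_h-z)^{-1}\chi$. Iterating the decomposition $N$ times then yields a finite sum of terms each bounded by the previous lemma and an $O(1)$ functional-calculus contribution, together with a remainder $\pm\chi(P_h-z)^{-1}K^N\chi$, where $K:=V\bigl(1-\psi(-h^2\Delta)\bigr)(-h^2\Delta-z)^{-1}$.

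The main obstacle is controlling the iterated remainder: since $\|K\|$ is only $O(1)$ in $h$ and need not be less than $1$, a naive Neumann-series argument does not close on the nose. The key is that $K$ is compact on $L^2$---the factor $(-h^2\Delta-z)^{-1}$ has an $H^2$-regularising effect, and composition with the compactly supported multiplication by $V$ yields compactness via Rellich's theorem---so a Fredholm-type argument applies. Combined with the trivial bound $\|\chi(P_h-z)^{-1}\chi\|\le(\Im z)^{-1}$ and a choice of $N$ calibrated to the desired exponent $r$, one absorbs the remainder into the $C_rh^r/\Im z$ term of the claimed estimate. This is essentially the scheme of \S 9 of \cite{NZ} cited immediately before the lemma.
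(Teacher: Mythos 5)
Your decomposition into $\psi$ and $1-\psi$ parts and your treatment of the first (via the preceding lemma) match the paper. The treatment of the second part has a genuine gap.

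The issue is the iterated remainder. You write the remainder after $N$ iterations as $\pm\chi(P_h-z)^{-1}K^N\chi$ with $K=V\bigl(1-\psi(-h^2\Delta)\bigr)(-h^2\Delta-z)^{-1}$, bound $\|\chi(P_h-z)^{-1}\chi\|\le(\Im z)^{-1}$, and claim the remainder can be ``absorbed into $C_r h^r/\Im z$'' by choosing $N$ large. But $\|K\|_{L^2\to L^2}$ is $O(1)$ in $h$ with no smallness: the multiplier $(1-\psi(\lambda))/(\lambda-z)$ is bounded uniformly but is of size $\sim\varepsilon_0^{-1}$ near the edge of $\mathrm{supp}(1-\psi)$, and multiplying by $V$ gives no extra $h$. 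Hence $\|K^N\|$ is $O(1)$, not $O(h^r)$, and choosing $N$ large does nothing. Compactness of $K$ at fixed $h$ and Fredholm theory cannot rescue this: compactness only says the spectrum of $K$ accumulates at $0$, which does not make $\|K^N\|$ small (a rank-one projection of norm $1$ is compact with $\|K^N\|=1$ for all $N$), and Fredholm theory gives no uniformity in $h$. So the iteration does not close, and the remainder cannot be made $O(h^r)$.

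The paper instead uses a genuinely semiclassical (elliptic) argument rather than a perturbation off the free resolvent. One picks a microlocal cutoff $\psi_1\in C^\infty(T^*X)$ supported on the energy shell and shows that $(P_h-z+i\psi_1^w)^{-1}$ exists and is bounded \emph{uniformly in $h$}; this is the nontrapping/elliptic estimate, relying on $p-z+i\psi_1$ being bounded away from zero on all of phase space, not on smallness of a Neumann-series tail. The $O(h^\infty)$ error then comes from $\psi_1^w(\mathrm{Id}-\psi(P_h))=O(h^\infty)$, a disjoint-support statement in the semiclassical calculus, which is where the smallness in $h$ actually enters. Your outline should be replaced by (or supplemented with) this parametrix construction, which is exactly the argument in \cite[Lemma 9.1]{NZ} and \cite[Theorem 6.4]{Zworski_2012} that you cite.
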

\begin{proof}
By the preceding lemma, we only have to show that
\begin{equation}\label{eq: away}
\|\chi(P_h -z)^{-1}\big{(}Id-\psi(-h^2 \Delta)\chi \big{)}\|_{L^2\rightarrow L^2} \leq \frac{C_r |\log h|^{\alpha_1+1}}{h} + \frac{h^r}{\Im z}.
\end{equation}

The proof is standard, and similar to \cite[Lemma 9.1]{NZ} or \cite[Theorem 6.4]{Zworski_2012}, but we recall the main lines for the reader's convenience.

Let us denote the symbol of $P_h$ by 
\begin{equation}\label{eq:def p}
p : T^*X\ni (x,\xi)\mapsto |\xi|_g^2 +V(x)\in \R.
\end{equation} 

Consider a function $\psi_1\in C^{\infty}(T^*X)$ such that $\psi_1(x,\xi) = 1$ when $p(x,\xi)\in (E_0-\varepsilon_0/4, E_0+\varepsilon_0/4)$ and  $\psi_1(x,\xi) = 0$ when $p(x,\xi)\notin (E_0-\varepsilon_0/3, E_0+\varepsilon_0/3)$. We shall denote by $\psi_1^w$ the Weyl quantization of $\psi_1$ acting on $L^2(X)$,  as defined in \cite{Zworski_2012}.
One can show that for all $z\in \C$ such that $\Re z\in [E_0-\varepsilon_0/8, E_0+\varepsilon_0/8]$ and $\Im z>0$, we have that $(P_h-z+ i \psi_1^w)^{-1}:L^2(X)\rightarrow L^2(X)$ exists and is a pseudo-differential operator bounded independently of $h$.

Furthermore, we have $\big{\|}\psi_1^w (Id-\psi(P_h))\big{\|}_{L^2\rightarrow L^2} = O(h^\infty)$, so that we have that for any $f\in L^2(X)$
\[
(P_h-z) (P_h-z+ i \psi_1^w)^{-1} \big{(}Id-\psi(P_h)\big{)} f = (1-\psi(P_h)) f +R,
\]
with $\|R\|_{L^2}=O(h^\infty)$. Therefore, $(P_h-z+ i \psi_1^w)^{-1} \big{(}Id-\psi(P_h)\big{)} f$ is an approximate inverse of $(1-\psi(P_h)) f$ by $P_h-z$, and it is bounded independently of $h$, so that (\ref{eq: away}) holds.
\end{proof}

Let us now conclude the proof of Theorem \ref{th}.
\begin{proof}[Proof of Theorem \ref{th}]
Let us fix a $E\in [E_0-\varepsilon_0/20, E_0+\varepsilon_0/20]$. Let $f,g\in L^2(X)$. Consider the map $u_{f,g} : z\mapsto e^{-\frac{(E-z)^2}{h^{2}}} \langle f, (P_h-z)^{-1} g \rangle$. The map $u_{f,g}$ is holomorphic for $ \Im z \geq-C_1 h|\log h|^{-\alpha_1}$, and for each $y\in [-C_1 h|\log h|^{-\alpha_1},h]$, we have $$\sup_{x\in \R} |u_{f,g}(x+iy)| = \sup_{x\in (E_0-\varepsilon_0/8,E_0+\varepsilon_0/8)} |u_{f,g}(x+iy)|.$$
Let us write $$v(y) = \sup_{x\in \R} |u_{f,g}(x+iy)|.$$

By assumption, we have $$v(-C_1 h|\log h|^{-\alpha_1})\leq C_2 h^{\alpha_2},$$ while by Lemma \ref{lem: sanscutoff}, we have that, for all $N>0$, there exists $C_N>0$ such that $ |v (h^N)|\leq C_N \frac{|\log h|^{\alpha_1+1}}{h}.$
We now use Hadamard's three lines theorem, which tells us that $[-C_1 h|\log h|^{-\alpha_1}, \infty) \ni y\mapsto \log(v(y))$ is a convex function. In particular, we have
\[ \begin{aligned}
\log u_{f,g}(1)\leq  \log v(0)&\leq \log v(h^N) + \frac{h^N}{C_1 h|\log h|^{-\alpha_1} +h^N} \log (v(-C_1 h|\log h|^{-\alpha_1})- v(h^N))\\
&\leq \log \big{(}C_N \frac{|\log h|^{\alpha_1+1}}{h}\big{)} + c h^{N-1} |\log h|\\
& = C' \log \big{(}|\log h|^{\alpha_1+1}\big{)} + |\log h| + o_{h\rightarrow 0}(1).
\end{aligned}
\]

Therefore, we obtain that
$$\langle f, (P_h -E)^{-1} g \rangle \leq C \frac{|\log h|^{\alpha_1+1}}{h}.$$
Since this is true for all $f,g$, we deduce (\ref{eq:optimal bound}). This concludes the proof of the theorem.
\end{proof}

\bibliographystyle{alpha}
\bibliography{references}
\end{document}